%
\documentclass[runningheads]{llncs}
\usepackage[T1]{fontenc}
%
\usepackage{graphicx}
%
%

\usepackage{amsmath,amsfonts,amssymb}
\usepackage{bm}
\usepackage{tikz,tikz-3dplot}
\usetikzlibrary{shapes}
\usetikzlibrary{patterns,snakes}
\usepackage{comment}

\begin{document}
\title{On cone partitions for the min-cut and max-cut problems with non-negative edges\thanks{The research is supported by the P.G. Demidov Yaroslavl State University Project VIP-016.}}
\titlerunning{On cone partitions for the min-cut and max-cut problems}
%
\author{Andrei V. Nikolaev\inst{1}\orcidID{0000-0003-4705-2409} \and
Alexander V. Korostil\inst{2}\orcidID{0000-0003-1881-0207}}
\authorrunning{A.V. Nikolaev and A.V. Korostil}
%
\institute{P.G. Demidov Yaroslavl State University, Yaroslavl, Russia
\email{andrei.v.nikolaev@gmail.com}\\
\and
Tinkoff Bank, Moscow, Russia\\
\email{av.korostil@gmail.com}}
\maketitle              
\begin{abstract}
We consider the classical minimum and maximum cut problems: find a partition of vertices of a graph into two disjoint subsets that minimize or maximize the sum of the weights of edges with endpoints in different subsets. It is known that if the edge weights are non-negative, then the min-cut problem is polynomially solvable, while the max-cut problem is NP-hard.

We construct a partition of the positive orthant into convex cones corresponding to the characteristic cut vectors, similar to a normal fan of a cut polyhedron. A graph of a cone partition is a graph whose vertices are cones, and two cones are adjacent if and only if they have a common facet. We define adjacency criteria in the graphs of cone partitions for the min-cut and max-cut problems. Based on them, we show that for both problems the vertex degrees are exponential, and the graph diameter equals 2. These results contrast with the clique numbers of graphs of cone partitions, which are linear for the minimum cut problem and exponential for the maximum cut problem.

\keywords{Min-cut and max-cut problems \and Cut polytope \and Cone partition \and 1-skeleton \and Vertex adjacency \and Graph diameter \and Vertex degree \and Clique number.}
\end{abstract}

\section{Introduction}

We consider two classical problems of finding a cut in an undirected graph.

\vspace{2mm}

\textbf{\textsc{Minimum and maximum cut problems.}}

\textsc{Instance.} Given an undirected graph $G=(V,E)$ with an edge weight function $w:E \rightarrow \mathbb{R}_{\geq 0}$.

\textsc{Question.} Find a subset of vertices $S \subset V$ such that the sum of the weights of the edges from $E$ with one endpoint in $S$ and another in $V \backslash S$ is as small as possible (\textit{minimum cut} or \textit{min-cut}) or as large as possible (\textit{maximum cut} or \textit{max-cut}).

\vspace{2mm}

Both problems have many practical applications.
The minimum cut problem is most often associated with the max-flow min-cut theorem of Ford and
Fulkerson: the maximum flow in the flow network is equal to the total weight of the edges in a minimum cut~\cite{Ford1956}.
In particular, it is used in planning communication networks and determining their reliability~\cite{Lun2008,Picard1980}.
In turn, the maximum cut problem arises in cluster analysis~\cite{Boros1989}, the Ising model in statistical physics~\cite{Barahona1988}, the VLSI design~\cite{Barahona1988,Chen1983}, and the image segmentation~\cite{deSousa2013}.

In terms of computational complexity, the min-cut problem with non-negative edges is polynomially solvable, for example, by Dinic-Edmonds-Karp flow algorithm in $O (|V|^{3} |E|)$ time~\cite{Edmonds1972}, or by Stoer-Wagner algorithm in $O(|V| |E| + |V|^2 \log |V|)$ time~\cite{Stoer1997}.
On the other hand, the min-cut and max-cut problems with arbitrary edges, and the max-cut problem with non-negative edges are known to be NP-hard~\cite{Garey1979,Karp1972}. More background information on the min-cut and max-cut problems can be found in the Encyclopedia of Optimization~\cite{Pardalos2009} and the Handbook of Combinatorial Optimization~\cite{Pardalos2013}.

In this paper, we approach to the cut problem from a polyhedral point of view by studying the properties of the graphs of cone partitions of a positive orthant with respect to the characteristic cut vectors.
The results of the research are summarized in Table~\ref{Table_pivot_1-skeleton_cut} and highlighted in bold.

\begin{table}[h]
	\centering
	\caption{Pivot table of properties of the graphs of cone partitions for the cut problems in the complete graph $K_n$}
	\label{Table_pivot_1-skeleton_cut}
	\resizebox{\textwidth}{!}{%
		\begin{tabular}{c|c|c|c}
			& Arbitrary & Minimum & Maximum \\
			& cut & non-negative & non-negative \\
			& & cut & cut \\ \hline
			Vertex adjacency & $O(1)$~\cite{Barahona1986} & $\bm{O(n)}$ & $\bm{O(n)}$ \\ \hline
			Diameter & $1$~\cite{Barahona1986} & $\bm{2}$ & $\bm{2}$ \\ \hline
			Vertex degree & $2^{n-1}-1$~\cite{Barahona1986} & $\bm{2^{n-k} + 2^{k} - 4}$ & $\bm{2^{n-1} - 2^k - 2^{n-k} + 2 + n}$ \\ \hline
			Clique number & $2^{n-1}$~\cite{Barahona1986} & ${2n - 3}$~\cite{Bondarenko2013} & ${{n \choose \frac{n}{2}-1}}$ or ${{n \choose \frac{n-1}{2}}}$~\cite{Bondarenko2016}
		\end{tabular}
	}
\end{table}

\section{Cut polytope and cone partition}

We consider a complete graph $K_n = (V,E)$ on $n$ vertices.
With each subset $S \subseteq V$ we associate the characteristic $0/1-$vector $\mathbf {v}(S) \in \{0,1\}^{d}$, where $d = C^{2}_{n}$ and
\[
\mathbf{v}(S)_{i,j} =
\begin{cases}
	1, &\text{if } |S \cap \{i,j\}| = 1,\\
	0, &\text{otherwise}.
\end{cases}
\]
Thus, the coordinates of the characteristic vector (also known as the \textit{cut vector}) indicate whether the corresponding edges are in the cut or not.

The \textit{cut polytope} $\mathrm{CUT}(n)$ (see Barahona and Mahjoub~\cite{Barahona1986}) is defined as the convex hull of all characteristic (cut) vectors:
\[\mathrm{CUT}(n) = \operatorname{conv} {\{\mathbf{v} (S): S \subseteq V\}} \subset \mathbb{R}^{d}.\]
An example of constructing the cut polytope $\mathrm{CUT}(3)$ is shown in Fig.~\ref{Fig_CUT(3)}.

\begin{figure}[t]
	\centering
	\begin{tikzpicture}[scale=0.95]
		\node[circle,draw,inner sep=3pt,fill=blue,fill opacity = 0.25] (A) at (-30:1) {};
		\node[circle,draw,inner sep=3pt,fill=blue,fill opacity = 0.25] (B) at (90:1) {};
		\node[circle,draw,inner sep=3pt,fill=blue,fill opacity = 0.25] (C) at (210:1) {};
		
		\draw (A) -- (B) -- (C) -- (A);
		
		\begin{scope}[xshift=3cm]
			\node[circle,draw,inner sep=3pt,fill=red,fill opacity = 0.25] (A) at (-30:1) {};
			\node[circle,draw,inner sep=3pt,fill=blue,fill opacity = 0.25] (B) at (90:1) {};
			\node[circle,draw,inner sep=3pt,fill=blue,fill opacity = 0.25] (C) at (210:1) {};
			
			\draw (A) -- (B) -- (C) -- (A);
			
			\draw [thick,red] (A) -- (B);
			\draw [thick,red] (A) -- (C);
			
			\draw [purple,dashed,thick] (45:1.2) -- (255:1.2);
		\end{scope}

		\begin{scope}[xshift=6cm]
			\node[circle,draw,inner sep=3pt,fill=blue,fill opacity = 0.25] (A) at (-30:1) {};
			\node[circle,draw,inner sep=3pt,fill=red,fill opacity = 0.25] (B) at (90:1) {};
			\node[circle,draw,inner sep=3pt,fill=blue,fill opacity = 0.25] (C) at (210:1) {};
			
			\draw (A) -- (B) -- (C) -- (A);
			
			\draw [thick,red] (B) -- (A);
			\draw [thick,red] (B) -- (C);
			
			\draw [purple,dashed,thick] (15:1.2) -- (165:1.2);
		\end{scope}

		\begin{scope}[xshift=9cm]
			\node[circle,draw,inner sep=3pt,fill=blue,fill opacity = 0.25] (A) at (-30:1) {};
			\node[circle,draw,inner sep=3pt,fill=blue,fill opacity = 0.25] (B) at (90:1) {};
			\node[circle,draw,inner sep=3pt,fill=red,fill opacity = 0.25] (C) at (210:1) {};
			
			\draw (A) -- (B) -- (C) -- (A);
			
			\draw [thick,red] (C) -- (A);
			\draw [thick,red] (C) -- (B);
			
			\draw [purple,dashed,thick] (135:1.2) -- (285:1.2);
		\end{scope}
		
		\node at (4.5,-1.5) {$\mathrm{CUT}(3) = \operatorname{conv}\left\lbrace (0,0,0), (0,1,1), (1,0,1), (1,1,0) \right\rbrace$};
		
	\end{tikzpicture}
	\caption {An example of constructing a cut polytope for $K_3$}
	\label {Fig_CUT(3)}
\end{figure}
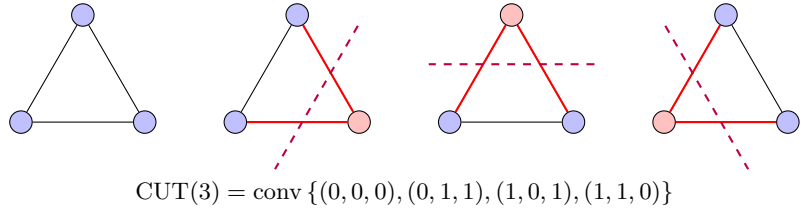

The cut polytope and its various relaxations often serve as the linear programming models for the cut problem. See, for example, the polynomial time algorithm by Barahona for the max-cut problem on graphs not contractible to $K_5$~\cite{Barahona1983}.

We consider a dual construction of a cone partition (see Bondarenko~\cite{Bondarenko1983,Bondarenko1993}), similar to a \textit{normal fan} (see, for example, Ziegler~\cite{Ziegler1995}). Let $\mathrm{X}$ be some set of points in $\mathbb{R}^{d}$ (for example, all cut vectors $\mathbf{v}(S)$ for $S \subseteq V$), and $\mathbf{x} \in \mathrm{X}$. Denote by
\[ K(\mathbf{x}) = \{\mathbf{c} \in \mathbb{R}^{d}:\ \langle \mathbf{c}, \mathbf{x} \rangle \geq \langle \mathbf{c}, \mathbf{y} \rangle,\ \forall \mathbf{y} \in \mathrm{X}\},\]
where $\langle \mathbf{c}, \mathbf{x} \rangle = \mathbf{c}^T \mathbf{x}$ is the scalar product. 
Thus, $K(\mathbf{x})$ as a set of solutions to a system of linear homogeneous inequalities is a convex polyhedral cone that includes all points $\mathbf{c} \in \mathbb{R}^d$, for which the linear function $\mathbf{c}^T \mathbf{x}$ achieves its maximum on the set $\mathrm{X}$ at the point $\mathbf{x}$.
The collection of all cones $K(\mathbf{x})$ is called the \textit{cone partition of the space $\mathbb{R}^{d}$ with respect to the set $\mathrm{X}$} (Fig.~\ref{Fig_pairwise_adjacent_cones}).
The cone partition is analogous to the Voronoi diagram, exactly coinciding with it if the Euclidean norms of all points of the set $\mathrm{X}$ are equal.

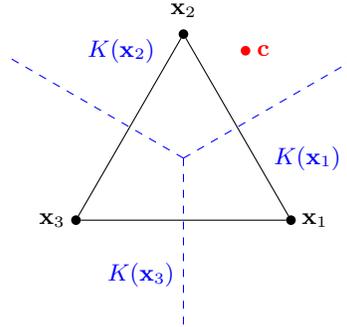
\begin{figure}[t]
	\centering	
	\begin{tikzpicture}[scale=1.65]
		\begin{scope}[every node/.style={circle,thick,draw,inner sep=1pt,fill=black,color=black}]
			\node[label=right: $\mathbf{x}_1$] (x1) at (-30:1) {};
			\node[label=above: $\mathbf{x}_2$] (x2) at (90:1) {};
			\node[label=left: $\mathbf{x}_3$] (x3) at (210:1) {};
		\end{scope}
		
		\draw (x1) -- (x2) -- (x3) -- (x1);
		
		\draw [color=blue,dashed] (0,0) -- (30:1.65);
		\draw [color=blue,dashed] (0,0) -- (150:1.65);
		\draw [color=blue,dashed] (0,0) -- (270:1.4);
		
		\node [color=blue] at (0:1) {$K(\mathbf{x}_1)$};
		\node [color=blue] at (120:1) {$K(\mathbf{x}_2)$};
		\node [color=blue] at (250:1) {$K(\mathbf{x}_3)$};
		
		\begin{scope}[every node/.style={circle,thick,draw,inner sep=1pt,fill=red,color=red}]
			\node[label=right: $\mathbf{c}$] (c) at (60:1) {};
		\end{scope}
		
		
	\end{tikzpicture}
	\caption {An example of a 1-skeleton and cone partition}
	\label {Fig_pairwise_adjacent_cones}
\end{figure}

The 1-\textit{skeleton} of a polytope $P$ is the graph whose vertex set is the vertex set of $P$ and the edge set is the set of geometric edges or one-dimensional faces of $P$.
The cone partition of space is directly related to the 1-skeleton of a polytope since two vertices $\mathbf{x}_1$ and $\mathbf{x}_2$ of the polytope $\operatorname{conv}(\mathrm{X})$ are adjacent if and only if the cones $K(\mathbf{x_1} )$ and $K(\mathbf{x_2})$ have a common facet (see Bondarenko~\cite{Bondarenko1983}):
\[
\mathbf{x}_1 \text{ and } \mathbf{x}_2 \text{ adjacent } \Leftrightarrow \ \dim \left( K\left( \mathbf{x}_1\right)  \cap K\left( \mathbf{x}_2\right)  \right) = d - 1.
\]
We call such two cones \textit{adjacent} and consider \textit{the graph of a cone partition} of the space $\mathbb{R}^{d}$ with respect to the set $\mathrm{X}$. In the general case, it coincides with the 1-skeleton of a polytope.

The study of the 1-skeleton is of interest, since, on the one hand, the vertex adjacency can be directly applied to develop simplex-like combinatorial optimization algorithms that move from one feasible solution to another along the edges of the 1-skeleton.
See, for example, the set partitioning algorithm by Balas and Padberg \cite{Balas1975}, Balinski's algorithm for the assignment problem \cite{Balinski1985}, Ikura and Nemhauser's algorithm for the set packing \cite{Ikura1985}, etc.

On the other hand, some characteristics of the 1-skeleton estimate the time complexity for different computation models and classes of algorithms.
In particular, \textit{the diameter} (the greatest distance between any pair of vertices) is a lower bound for the number of iterations of the simplex method and similar algorithms.
Indeed, let the shortest path between a pair of vertices $\mathbf{u}$ and $\mathbf{v}$ of a polytope $P$ consist of $d(P)$ edges. If a simplex-like algorithm chooses $\mathbf{u}$ as the initial solution, and the optimal solution is $\mathbf{v}$, then no matter how successfully the algorithm chooses the next adjacent vertex of the 1-skeleton, the number of iterations cannot be less than $d(P)$ (see, for example, Dantzig~\cite{Dantzig1963}).

Note that although the diameter of a graph can easily be found in polynomial time in the number of vertices, combinatorial polytopes tend to have exponentially many vertices.
In general, it is NP-hard to determine the diameter of a 1-skeleton of a polytope specified by linear inequalities with integer data (see Frieze and Teng~\cite{Frieze1994}).

Another important characteristic is the \textit{clique number} of the 1-skeleton of the polytope $P$ (the number of vertices in the largest clique), which serves as a lower bound on the worst-case complexity in the class of algorithms based on linear decision trees. See Bondarenko~\cite{Bondarenko1993} for more details. Besides, for all known cases, it has been established that the clique number of 1-skeleton of a polytope is polynomial for polynomially solvable problems~\cite{Bondarenko2018,Maksimenko2004,Nikolaev2022} and superpolynomial for intractable problems~\cite{Bondarenko1983,Bondarenko2017,Padberg1989,Simanchev2018}.

Returning to the cut polytope, the properties of its 1-skeleton were studied by Barahona and Mahjoub in~\cite{Barahona1986}.
\begin{theorem}[Barahona and Mahjoub~\cite{Barahona1986}]
	The 1-skeleton of the $\mathrm{CUT}(n)$ polytope is a complete graph.
\end{theorem}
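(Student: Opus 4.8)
The plan is to prove the stronger statement that \emph{every} pair of distinct cut vectors $\mathbf{v}(S_1)$ and $\mathbf{v}(S_2)$ is adjacent on $\mathrm{CUT}(n)$, which is exactly the assertion that the $1$-skeleton is complete. I would use the standard midpoint criterion for adjacency: two vertices $\mathbf{u}$ and $\mathbf{v}$ of a polytope $\operatorname{conv}(\mathrm{X})$ are adjacent if and only if in every convex representation of the midpoint $\tfrac{1}{2}(\mathbf{u}+\mathbf{v}) = \sum_{\mathbf{x}} \lambda_{\mathbf{x}} \mathbf{x}$ only $\mathbf{u}$ and $\mathbf{v}$ receive positive weight; equivalently, the minimal face containing the midpoint is the segment $[\mathbf{u},\mathbf{v}]$. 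Because all cut vectors are $0/1$ vectors, I would first record the elementary observation that a coordinate in which the midpoint equals $0$ forces that coordinate to vanish in every cut vector appearing with positive weight, and a coordinate equal to $1$ forces it to equal $1$. So it suffices to analyze the integral coordinates of the midpoint $\mathbf{m} = \tfrac{1}{2}(\mathbf{v}(S_1)+\mathbf{v}(S_2))$.

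Next I would introduce the four blocks $A = S_1 \cap S_2$, $B = S_1 \setminus S_2$, $C = S_2 \setminus S_1$, $D = V \setminus (S_1 \cup S_2)$, which partition $V$. Reading off whether an edge is cut by $S_1$ and by $S_2$ according to the blocks of its endpoints, one finds that $m_e = 0$ exactly on edges with both endpoints in the same block, and $m_e = 1$ exactly on edges joining $A$ to $D$ or joining $B$ to $C$ (all remaining edges give $m_e = \tfrac{1}{2}$ and impose no constraint). The $m_e = 0$ coordinates then force any contributing cut $\mathbf{v}(S)$ to leave every block entirely inside or entirely outside $S$, i.e.\ $S$ is a union of blocks; and the $m_e = 1$ coordinates force $S$ to separate $A$ from $D$ and to separate $B$ from $C$. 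Writing $S$ as a $0/1$ choice on the four blocks, these two separation conditions pin the choice down to precisely the four sets $S_1, S_2, V\setminus S_1, V\setminus S_2$, which as cut vectors are just $\mathbf{v}(S_1)$ and $\mathbf{v}(S_2)$ (using $\mathbf{v}(S) = \mathbf{v}(V\setminus S)$).

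Having shown that no cut vector other than $\mathbf{v}(S_1)$ and $\mathbf{v}(S_2)$ can occur in any convex representation of $\mathbf{m}$, the midpoint criterion yields adjacency, and since the pair was arbitrary the $1$-skeleton is complete. I expect the only delicate point to be the degenerate cases in which one or more blocks are empty (for instance when $A \cup D = \emptyset$ or $B \cup C = \emptyset$, which are exactly the excluded situations $S_1 = V\setminus S_2$ or $S_1 = S_2$): when a block vanishes the corresponding family of $m_e = 1$ edges disappears and one separation constraint is lost, so the bookkeeping must be done carefully. In each such case, however, the freedom that is gained on the empty block does not change the resulting cut vector, so the conclusion that only $\mathbf{v}(S_1)$ and $\mathbf{v}(S_2)$ survive still holds; verifying this robustness is the main technical step. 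An alternative route, which I would keep in reserve, is to exhibit an explicit edge weight function under which $S_1$ and $S_2$ are the unique optimal cuts, but the midpoint argument above is the most transparent.
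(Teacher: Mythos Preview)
The paper does not give its own proof of this theorem; it is quoted verbatim from Barahona and Mahjoub as background, with no argument supplied, and the text immediately moves on to the cone partitions that are the paper's actual subject. So there is no proof in the paper to compare your proposal against.

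For what it is worth, your midpoint argument is correct. The block decomposition $A,B,C,D$ and the identification of the integral coordinates of $\tfrac12(\mathbf v(S_1)+\mathbf v(S_2))$ are accurate, and the enumeration of block-unions satisfying the forced separations does collapse to $\{\mathbf v(S_1),\mathbf v(S_2)\}$. The degenerate cases you flag are harmless: when exactly one of $A,D$ (respectively $B,C$) is empty, the corresponding separation constraint disappears, but including or excluding an empty block does not change the cut vector, so the surviving four block-unions still yield only $\mathbf v(S_1)$ and $\mathbf v(S_2)$; the doubly degenerate situations force one of $S_1,S_2$ to be $\emptyset$ or $V$, where only two blocks remain and the conclusion is immediate. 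By contrast, the original Barahona--Mahjoub argument exploits the switching automorphisms of $\mathrm{CUT}(n)$ to reduce to the case where one cut is the empty cut, after which the analysis is trivially a two-block situation; your approach trades that symmetry reduction for a direct four-block case analysis, which is slightly longer but entirely self-contained.
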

Thus, any two vertices of the cut polytope $\mathrm{CUT}(n)$ are adjacent, which makes the 1-skeleton not very useful in this case.

However, the polytope $\mathrm{CUT}(n)$ is associated with the cut problem in the graph with arbitrary edge weights and does not reflect the differences between max-cut and min-cut problems with non-negative edges. Since with arbitrary edges, both min-cut and max-cut problems are equivalent and NP-hard~\cite{Garey1979}.

To take into account the specifics of the cut problem, the construction of a \textit{cut polyhedron} is introduced (see Conforti et al.~\cite{Conforti2004}), which is the dominant of a cut polytope:
\[\operatorname{dmt} (\mathrm{CUT}(n)) = \mathrm{CUT}(n) + \mathbb{R}^{d}_{+},\]
i.e. the Minkowski sum of a polytope and a positive orthant.

In this paper, we consider the dual construction of a cone partition of a positive orthant with respect to the set of cut vectors for all non-empty cuts $X \subset V$ in the complete graph $K_n=(V,E)$:
\begin{align*}
	K^{+}_{\max}(X) &= \{\mathbf{c} \in \mathbb{R}^{d},\ \mathbf{c} \geq \mathbf{0}:\ \langle \mathbf{c}, \mathbf{v} (X) \rangle \geq \langle \mathbf{c}, \mathbf{v} (Y) \rangle,\ \forall Y \subset V\},\\
	K^{+}_{\min}(X) &= \{\mathbf{c} \in \mathbb{R}^{d},\ \mathbf{c} \geq \mathbf{0}:\ \langle \mathbf{c}, \mathbf{v} (X) \rangle \leq \langle \mathbf{c}, \mathbf{v} (Y) \rangle,\ \forall Y \subset V\}.
\end{align*}
Firstly, these cone partitions and their graphs were introduced in~\cite{Bondarenko2013} and later studied in~\cite{Bondarenko2016}. 
In particular, it was established that the clique number of a graph of a cone partition is linear for the polynomially solvable min-cut problem and exponential for the NP-hard max-cut problem.
Similar results for the cut polyhedron and the minimum cut problem are considered in~\cite{Conforti2004,Skutella2010}.

Note that we exclude the empty cut from consideration and identify each cut $X \subset V$ and its complement $\bar{X} = V \backslash X$. Thus, the total number of cuts is $2^{|V|-1}-1$.

\section{Vertex adjacency}

The adjacency criteria in the graphs of cone partitions for cut problems with non-negative edges were introduced in~\cite{Bondarenko2013}. In this section, we present simpler alternative versions of the criteria with new proofs based on crossing sets terminology and the submodularity of the cut function.

Two subsets $A,B \subset V$ are called \textit{crossing} if
\[A \cap B \neq \emptyset, \text{ and } A \backslash B \neq \emptyset, \text{ and } B \backslash A \neq \emptyset, \text{ and } V \backslash (A \cup B) \neq \emptyset.\]
An example of crossing sets is shown in Fig.~\ref{Fig_crossing_sets}.

\begin{figure}[t]
	\centering
	\begin{tikzpicture}[scale=0.75]

		\begin{scope}[every node/.style={circle,thick,draw,inner sep=3pt}]
			\node (W) at (-2,0) {};
			\node (N) at (0,1) {};
			\node (E) at (2,0) {};
			\node (S) at (0,-2) {};
		\end{scope}
		
		\draw [rotate around={45:(-1,-1)},blue] (-1,-1) ellipse (1 and 2.5);
		\node [blue] at (-1.2,-0.8) {$A$};
		
		\draw [red] [rotate around={-45:(1,-1)}] (1,-1) ellipse (1 and 2.5);
		\node [red] at (1.2,-0.8) {$B$};
		
	\end{tikzpicture}
	\caption{An example of crossing sets}
	\label {Fig_crossing_sets}
\end{figure}
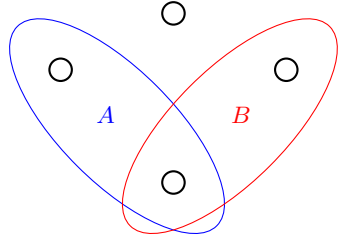

\begin{theorem} \label{Theorem_mincut_adjacency}
	The cones $K^{+}_{\min}(X)$ and $K^{+}_{\min}(Y)$ are adjacent if and only if the cuts $X$ and $Y$ are not crossing.
\end{theorem}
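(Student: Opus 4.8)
The plan is to make the submodularity identity of the cut function the engine of both directions. Writing $f_{\mathbf c}(S) = \langle \mathbf c, \mathbf v(S)\rangle$ for the weight of the cut induced by $S$ and $w_{\mathbf c}(A,B)$ for the total weight of edges with one endpoint in $A$ and the other in $B$, a direct expansion over the four regions $X\cap Y$, $X\setminus Y$, $Y\setminus X$, $V\setminus(X\cup Y)$ yields, for every $\mathbf c \ge \mathbf 0$,
\[
f_{\mathbf c}(X) + f_{\mathbf c}(Y) = f_{\mathbf c}(X\cap Y) + f_{\mathbf c}(X\cup Y) + 2\,w_{\mathbf c}(X\setminus Y,\ Y\setminus X).
\]

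For the direction \emph{crossing} $\Rightarrow$ \emph{not adjacent}, set $C = K^{+}_{\min}(X)\cap K^{+}_{\min}(Y)$; every $\mathbf c \in C$ makes both $X$ and $Y$ minimum cuts, so $f_{\mathbf c}(X) = f_{\mathbf c}(Y) \le f_{\mathbf c}(Z)$ for all cuts $Z$. If $X$ and $Y$ cross, then $X\cap Y$ and $X\cup Y$ are nonempty proper cuts, hence both have weight $\ge f_{\mathbf c}(X)$. Substituting into the identity forces $w_{\mathbf c}(X\setminus Y, Y\setminus X) \le 0$, and since $\mathbf c \ge \mathbf 0$ this is an equality. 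As $X\setminus Y$ and $Y\setminus X$ are nonempty, at least one edge $e$ runs between them, and $c_e = 0$ for every $\mathbf c \in C$. Thus $C$ lies in the coordinate hyperplane $\{c_e = 0\}$ as well as in $\{f_{\mathbf c}(X) = f_{\mathbf c}(Y)\}$; these linear hyperplanes are distinct, because the normal $\mathbf v(Y) - \mathbf v(X)$ vanishes in coordinate $e$ (that edge lies in both cuts). Hence $\dim C \le d-2$ and the cones are not adjacent.

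For the converse, I would first use $\mathbf v(X) = \mathbf v(\bar X)$, so that $K^{+}_{\min}$ depends only on the cut and replacing $X$ or $Y$ by its complement changes nothing. Each of the four non-crossing cases (one region empty) then reduces to the nested situation $X \subset Y$. Write $V = P \cup R \cup T$ with $P = X$, $R = Y\setminus X$, $T = V\setminus Y$, all nonempty, and construct a strictly positive $\mathbf c^{\ast}$ for which $X$ and $Y$ are the unique minimum cuts with equal value. I would place uniform weights on the three complete bipartite classes, with totals $\alpha$ between $P,R$, $\beta$ between $R,T$, and $\gamma$ between $P,T$, choosing $\alpha=\beta>\gamma>0$, together with a small uniform weight $\varepsilon>0$ on every intra-block edge.

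The verification is the crux, and the main obstacle is ruling out cuts that split a block. The cross-block contribution of a cut $Z$ depends only on the fractions $x,y,z$ of $P,R,T$ it places on one side and equals the multilinear expression $\alpha(x+y-2xy)+\beta(y+z-2yz)+\gamma(x+z-2xz)$; being multilinear it attains its minimum at a vertex of $[0,1]^3$, so it is always at least $\alpha+\gamma$, with value exactly $\alpha+\gamma$ only for the block-respecting cuts $X$ and $Y$ (the third block-respecting cut has value $2\alpha>\alpha+\gamma$). Any cut splitting a block also severs an intra-block edge, adding at least $\varepsilon$, and is therefore strictly larger. Thus $X,Y$ are the unique minima at $\mathbf c^{\ast}$. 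Since $\mathbf c^{\ast}$ is interior to the orthant and the only tight optimality inequality is $f_{\mathbf c}(X)=f_{\mathbf c}(Y)$, the two cones coincide near $\mathbf c^{\ast}$ with the halfspaces $\{f_{\mathbf c}(X)\le f_{\mathbf c}(Y)\}$ and $\{f_{\mathbf c}(Y)\le f_{\mathbf c}(X)\}$, so $C$ contains a relatively open piece of the hyperplane $\{f_{\mathbf c}(X)=f_{\mathbf c}(Y)\}$ and $\dim C = d-1$, giving adjacency.
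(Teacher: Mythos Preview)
Your forward direction (crossing $\Rightarrow$ not adjacent) is correct and in fact sharper than the paper's: the paper just invokes submodularity to produce a third cut of no larger weight and says the strict inequality is violated, whereas your argument pins down two distinct linear hyperplanes containing $K^{+}_{\min}(X)\cap K^{+}_{\min}(Y)$ and directly gets $\dim\le d-2$.

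The converse, however, has a real gap. Your claim that the multilinear cross-block function
\[
g(x,y,z)=\alpha(x+y-2xy)+\beta(y+z-2yz)+\gamma(x+z-2xz)
\]
is ``always at least $\alpha+\gamma$'' on non-trivial cuts is false. The minimum of $g$ over $[0,1]^{3}$ is indeed attained at a vertex, but that minimum is $0$, taken at $(0,0,0)$ and $(1,1,1)$, precisely the excluded trivial cuts. Once those two vertices are removed the ``minimum at a vertex'' argument no longer bounds anything, and $g$ does drop below $\alpha+\gamma$ at interior lattice points. Concretely, take $|P|=|R|=|T|=2$ and let $Z$ pick one vertex from each block, so $(x,y,z)=(\tfrac12,\tfrac12,\tfrac12)$; then $g=\tfrac12(\alpha+\beta+\gamma)=\alpha+\tfrac{\gamma}{2}<\alpha+\gamma$, while the intra-block contribution is only $3\varepsilon$. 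With your ``small $\varepsilon$'' this cut strictly beats both $X$ and $Y$, so they are not the unique minima and your adjacency certificate collapses.

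The remedy is the opposite of what you did: make the intra-block weights \emph{large}, not small. The paper's construction sets the cross-block totals to $2,2,1$ (so $\alpha=\beta=2$, $\gamma=1$, giving $f(X)=f(Y)=3$ and $f(Y\setminus X)=4$) and then assigns weight $4$ to every remaining edge. Any cut that splits a block now contains an edge of weight $4$ and is immediately at least $4>3$, so $X$ and $Y$ are the unique minimizers with no multilinear analysis needed.
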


\begin{proof}
	Suppose that the cuts $X$ and $Y$ are crossing, but the cones $K^{+}_{\min}(X)$ and $K^{+}_{\min}(Y)$ are adjacent. The adjacency of cones means that there exists a non-negative vector $\mathbf{c}$ that belongs to both cones $K^{+}_{\min}(X)$ and $K^{+}_{\min}(Y)$ but does not belong to any other cone from the partition $K^{+}_{\min}$:
	\begin{equation}
		\exists \mathbf{c}\in \mathbb{R}^{d}\ (\mathbf{c} \geq \mathbf{0}):\ \langle \mathbf{c},\mathbf{v}(X) \rangle = \langle \mathbf{c},\mathbf{v}(Y) \rangle < \langle \mathbf{c},\mathbf{v}(Z) \rangle,\ \forall Z \subset V\  (Z \neq X,Y).
		\label{Eq_cone_facet_ineq}
	\end{equation}
	
	An important property of the cut function is \textit{submodularity} (see Schrijver~\cite{Schrijver2003}):
	\[\langle \mathbf{c},\mathbf{v}(X) \rangle + \langle \mathbf{c},\mathbf{v}(Y) \rangle \geq \langle \mathbf{c},\mathbf{v}(X \cup Y) \rangle + \langle \mathbf{c}, \mathbf{v} (X \cap Y) \rangle.\]
	Since the cuts $X$ and $Y$ are crossing, both cuts $X \cup Y$ and $X \cap Y$ exist and are not empty. Moreover, the value of at least one of them does not exceed $\langle \mathbf{c},\mathbf{v}(X) \rangle$ and $\langle \mathbf{c},\mathbf{v}(Y) \rangle $ due to the submodularity of the cut function. 
	Therefore, the inequality (\ref{Eq_cone_facet_ineq}) is violated, and the cones $K^{+}_{\min} (X)$ and $K^{+}_{\min} (Y)$ are not adjacent.
	
	Now suppose that the cuts $X$ and $Y$ are not crossing. It is easy to check that in this case at least one of the cuts or its complement is a subset of another cut or its complement. Without loss of generality, we assume that $X \subset Y$.
	
	We consider the following vector $\mathbf{c}$ of edge weights (Fig.~\ref{Fig_cut_xy_nested}):
	\begin{itemize}
		\item the total weight of the edges between $X$ and $Y \backslash X$, and between $\bar{Y}$ and $Y \backslash X$ are both equal to $2$;
		\item edges between $X$ and $\bar{Y}$ have total weight $1$;
		\item all other edges have weight $4$.
	\end{itemize}

	\begin{figure}[t]
		\centering
		\begin{tikzpicture}[scale=0.8]

			\begin{scope}[every node/.style={circle,thick,draw,minimum size=10mm}]
				\node [blue] (X) at (-2,0) {$X$};
				\node [red,dashed] (notY) at (2,0) {$\bar{Y}$};
				\node (Y-X) at (0,-2) {$Y \backslash X$};
			\end{scope}
			
			\draw [rotate around={45:(-1,-1)},red] (-1,-1) ellipse (1.25 and 3);
			\node [red] at (-1.5,-1.5) {$Y$};
			
			\draw [blue,dashed] [rotate around={-45:(1,-1)}] (1,-1) ellipse (1.25 and 3);
			\node [blue] at (1.5,-1.5) {$\bar{X}$};
			
			\draw (X) -- node[left] {$2$}  (Y-X);
			\draw (notY) -- node[right] {$2$}  (Y-X);
			\draw (X) -- node[above] {$1$}  (notY);
			
		\end{tikzpicture}
		\caption{Cuts of $X$ and $Y$ in the case of $X \subset Y$}
		\label{Fig_cut_xy_nested}
	\end{figure}
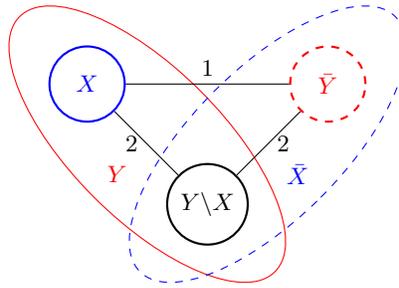
	
	By construction, the values of the cuts $X$ and $Y$ are both equal to $3$, the value of the cut $Y \backslash X$ is $4$, and all other cuts contain at least one edge of the weight $4$. Thus, by the inequality (\ref {Eq_cone_facet_ineq}), the cones $K^{+}_{\min} (X)$ and $K^{+}_{\min} (Y)$ are adjacent.
\end{proof}

\begin{theorem} \label{Theorem_maxcut_adjacency}
	The cones $K^{+}_{\max} (X)$ and $K^{+}_{\max} (Y)$ are adjacent if and only if one of the following conditions is satisfied:
	\begin{itemize}
		\item cuts $X$ and $Y$ are crossing;
		
		\item 
		the symmetric difference between cuts $X$ and $Y$ contains exactly one element
		\[|X \ominus Y| = 1, \text{ or } |\bar{X} \ominus Y| = 1, \text { or } |X \ominus \bar{Y}| = 1, \text{ or } |\bar{X} \ominus \bar{Y}| = 1.\]
	\end{itemize}
\end{theorem}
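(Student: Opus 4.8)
The plan is to use the same facet description as in Theorem~\ref{Theorem_mincut_adjacency}, with the inequality reversed: the cones $K^{+}_{\max}(X)$ and $K^{+}_{\max}(Y)$ are adjacent if and only if there is a witness $\mathbf{c}\ge\mathbf{0}$ with
\[\langle\mathbf{c},\mathbf{v}(X)\rangle=\langle\mathbf{c},\mathbf{v}(Y)\rangle>\langle\mathbf{c},\mathbf{v}(Z)\rangle,\quad\forall Z\subset V\ (Z\neq X,Y),\]
and for the dimension count one such $\mathbf{c}$ may be taken strictly positive. Since $\mathbf{v}(X)=\mathbf{v}(\bar X)$, complementing $X$ or $Y$ leaves the cones unchanged and only permutes the four regions $A=X\cap Y$, $B=X\setminus Y$, $C=Y\setminus X$, $D=V\setminus(X\cup Y)$, so I will use this symmetry to fix a convenient representative in each case. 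Writing $|X\ominus Y|=|B|+|C|$ and $|X\ominus\bar Y|=|A|+|D|$, the two listed conditions read ``all of $A,B,C,D$ are nonempty'' and ``$|B|+|C|=1$ or $|A|+|D|=1$''.

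For necessity I argue by contraposition and lean on submodularity, as in the previous proof. If $X,Y$ are not crossing then some region is empty, and after complementing I may assume $B=\emptyset$, that is $X\subsetneq Y$; discarding the degenerate pairs, ``not crossing and not of symmetric difference one'' becomes exactly $|C|=|Y\setminus X|\ge 2$. Fix $v\in C$ and split $C=\{v\}\sqcup(C\setminus\{v\})$ into two nonempty parts. Applying submodularity to $S=X\cup\{v\}$ and $T=Y\setminus\{v\}$, for which $S\cup T=Y$ and $S\cap T=X$, gives $\langle\mathbf{c},\mathbf{v}(S)\rangle+\langle\mathbf{c},\mathbf{v}(T)\rangle\ge\langle\mathbf{c},\mathbf{v}(Y)\rangle+\langle\mathbf{c},\mathbf{v}(X)\rangle$. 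On the intersection cone the right-hand side equals $2m$, where $m$ is the common maximum, while each left-hand term is at most $m$; hence $\langle\mathbf{c},\mathbf{v}(X\cup\{v\})\rangle=m$ for \emph{every} $\mathbf{c}$ in the intersection. Thus the intersection lies in both $\{\langle\mathbf{c},\mathbf{v}(X)-\mathbf{v}(Y)\rangle=0\}$ and $\{\langle\mathbf{c},\mathbf{v}(X\cup\{v\})-\mathbf{v}(X)\rangle=0\}$, and these hyperplanes are distinct: any edge from a second vertex $v'\in C\setminus\{v\}$ to a vertex of $X$ lies in the support of $\mathbf{v}(X)-\mathbf{v}(Y)$ but not in that of $\mathbf{v}(X\cup\{v\})-\mathbf{v}(X)$, which is supported on edges incident to $v$. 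Consequently the intersection has dimension at most $d-2$ and the cones are not adjacent.

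For sufficiency I build a strictly positive witness in each case. In the crossing case I put on each edge between types $S$ and $T$ the weight $w_{ST}/(|S||T|)$ with $w_{AD}=w_{BC}=2$ and $w_{AB}=w_{AC}=w_{BD}=w_{CD}=1$, which is legitimate because all four regions are nonempty. With $z_T=|Z\cap T|/|T|$, a short computation yields $\langle\mathbf{c},\mathbf{v}(Z)\rangle=\sum_{S<T}w_{ST}(z_S+z_T-2z_Sz_T)$, multilinear in the $z_T$, and the substitution $u_T=2z_T-1$ turns it into $4-\tfrac12\sum_{S<T}w_{ST}u_Su_T$. Minimizing the form $\sum_{S<T}w_{ST}u_Su_T$ over $[-1,1]^4$, its value $-4$ is attained only at the sign patterns of $X$ and $Y$ (every other cube vertex gives a nonnegative value, hence cut value at most $4$, and a strict increase of the form along every edge leaving these two vertices rules out non-vertex minima); since a cube vertex corresponds to a type-respecting cut, the two optimal vertices are precisely $X$ and $Y$, which thus become the unique maximum cuts of common value $6$. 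Perturbing the intra-type edges by a small $\epsilon>0$ makes $\mathbf{c}$ strictly positive without changing $\langle\mathbf{c},\mathbf{v}(X)\rangle$ or $\langle\mathbf{c},\mathbf{v}(Y)\rangle$, and for $\epsilon$ below the gap to the next best cut value no new maximizers appear.

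In the symmetric-difference-one case I may assume $Y=X\cup\{v\}$. I place a large weight $M$ on every edge between $X$ and $\bar Y$, the weight $1/|X|$ on each edge from $v$ to $X$ and $1/|\bar Y|$ on each edge from $v$ to $\bar Y$, and a small $\epsilon>0$ on all remaining edges. The heavy complete bipartite part forces the unique optimal bipartition of $V\setminus\{v\}$ to be $X\mid\bar Y$, while $v$ contributes total weight $1$ to either side, so $X$ and $Y$ tie and strictly dominate every other cut once $M$ is large and $\epsilon$ small, with all weights positive. The main obstacle is the crossing case: a naive gadget supported on four representative vertices does tie $X$ and $Y$ but leaves every other vertex free, producing spurious co-optimal cuts; the fix---rescaling the type-to-type weights by $1/(|S||T|)$ so that the cut value collapses to a multilinear function on the cube whose optimum is provably the two target corners---is exactly what secures uniqueness, and hence the full-dimensionality of the common facet.
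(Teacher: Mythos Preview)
Your proof is correct and follows the same overall strategy as the paper: submodularity of the cut function for the necessity direction, and explicit edge-weight witnesses for sufficiency. The necessity argument is essentially identical to the paper's (the paper splits $Y\setminus X$ into two arbitrary nonempty parts $A,B$ and applies submodularity to $Z=X\cup A$ and $T=X\cup B$; you take the special case $A=\{v\}$ and add an explicit codimension-two argument, which is a nice touch but not needed under the paper's working definition of adjacency via a strict-inequality witness).

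Where you diverge is in the sufficiency witnesses, which you make considerably more elaborate in order to obtain strictly positive $\mathbf{c}$. For the crossing case the paper simply assigns a positive weight to \emph{every} edge between $X\cap Y$ and $\bar X\cap\bar Y$ (total~$1$) and to every edge between $X\cap\bar Y$ and $\bar X\cap Y$ (total~$1$), with all other weights zero; then $X$ and $Y$ are the only cuts containing all nonzero edges, so they are the unique maxima of value~$2$. Your closing paragraph suggests you anticipated spurious maximizers from a gadget on four representative vertices and fixed this with the $1/(|S||T|)$ rescaling and a cube-optimization argument; the paper's simpler fix is just to spread the weight over \emph{all} $A$--$D$ and $B$--$C$ edges. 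Likewise, in the symmetric-difference-one case the paper puts positive weight on every $X$--$\bar Y$ edge and zeros elsewhere, whereas you introduce a large $M$, a balanced contribution from $v$, and an $\epsilon$-perturbation. Both routes work; yours buys a strictly positive witness (useful if one wants an interior point of the common facet within the orthant), while the paper's buys brevity.
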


\begin{proof}
	Adjacency of the cones $K^{+}_{\max} (X)$ and $K^{+}_{\max} (Y)$ means that:
	\begin{equation}
		\exists \mathbf{c}\in \mathbb{R}^{d}\ (\mathbf{c} \geq \mathbf{0}):\ \langle \mathbf{c},\mathbf{v}(X) \rangle = \langle \mathbf{c},\mathbf{v}(Y) \rangle > \langle \mathbf{c},\mathbf{v}(Z) \rangle,\ \forall Z \subset V \ (Z \neq X,Y).
		\label{Eq_cone_facet_ineq_max}
	\end{equation}
	
	Let the cuts $X$ and $Y$ be not crossing and contain more than one element in the symmetric difference. Without loss of generality, we examine the case $X \subset Y$. Since $|X \ominus Y| > 1$, the set $Y \backslash X$ contains at least two elements and can be divided into two non-empty subsets $A$ and $B$. We consider two additional cuts $Z = X \cup A$ and $T = X \cup B$ (Fig.~\ref {Fig_cut_double_intersect}). 	By the submodularity of the cut function, we obtain that
	\[\langle \mathbf{c},\mathbf{v}(Z) \rangle + \langle \mathbf{c},\mathbf{v}(T) \rangle \geq \langle \mathbf{c},\mathbf{v}(X = Z \cap T) \rangle + \langle \mathbf{c},\mathbf{v}(Y = Z \cup T) \rangle.\]
	The value of at least one of the cuts $Z$ or $T$ cannot be less than the value of $X$ and $Y$.
	Thus, by (\ref {Eq_cone_facet_ineq_max}), the cones $K^{+}_{\max} (X)$ and $K^{+}_{\max} (Y)$ are not adjacent.
	
	\begin{figure}[t]
		\centering
		\begin{tikzpicture}[scale=0.65]
			\begin{scope}[every node/.style={circle,thick,draw,minimum size=10mm}]
				\node [blue] (x) at (0,0) {$X$};
				\node [red,dashed] (not-y) at (3,0) {$\bar{Y}$};
				\node (A) at (0,-3) {$A$};
				\node (B) at (3,-3) {$B$};
			\end{scope}
			
			\draw (not-y) -- (x);
			\draw (not-y) -- (A);
			\draw (not-y) -- (B);
			\draw (x) -- (A);
			\draw (x) -- (B);
			
			\node [red] at (1.5,-3.5) {$Y$};
			
			\draw [red] plot [smooth cycle] coordinates {(-1,1) (1,1) (1,-2) (4,-2) (4,-4) (-1,-4) };

			\begin{scope}[xshift=7cm]
				\begin{scope}[every node/.style={circle,thick,draw,minimum size=10mm}]
					\node [blue] (x) at (0,0) {$X$};
					\node [red,dashed] (not-y) at (3,0) {$\bar{Y}$};
					\node (A) at (0,-3) {$A$};
					\node (B) at (3,-3) {$B$};
				\end{scope}
				
				\draw (x) -- (not-y);
				\draw (x) -- (B);
				\draw (A) -- (not-y);
				\draw (A) -- (B);
				\draw (x) -- (A);
				\draw (not-y) -- (B);
				
				\node [purple] at (0,-4.1) {$Z$};
				\draw [purple] (0,-1.5) ellipse (1.25 and 3);

				\node [teal] at (3.8,-3.8) {$T$};
				\draw [teal] [rotate around={45:(1.5,-1.5)}] (1.5,-1.5) ellipse (1.15 and 3.75);
			\end{scope}

		\end{tikzpicture}
		\caption {Cuts $X$, $Y$, $Z$, and $T$.}
		\label {Fig_cut_double_intersect}
	\end{figure}
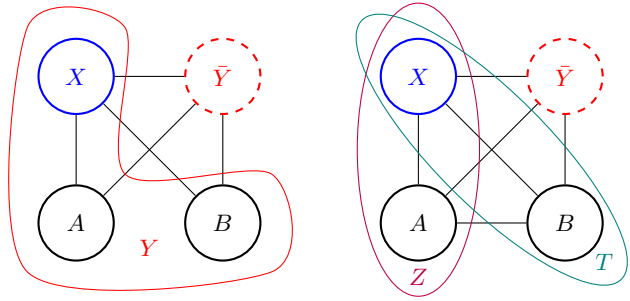

	Now suppose that the cuts $X$ and $Y$ are crossing. Again we consider the special vector $\mathbf{c}$ of edge weights (Fig.~\ref{Fig_cuts_crossing}):
	\begin{itemize}
		\item all edges between $X \cap Y$ and $\bar{X} \cap \bar{Y}$, and between $X \cap \bar {Y}$ and $\bar{X} \cap Y$ have positive weights with a total sum both equal to $1$;
		
		\item the weights of all other edges are zero.
	\end{itemize}

	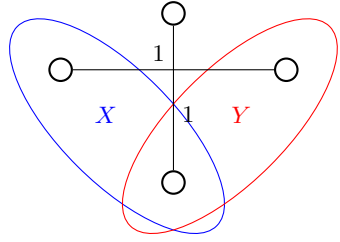
\begin{figure}[t]
		\centering
		\begin{tikzpicture}[scale=0.75]

			\begin{scope}[every node/.style={circle,thick,draw,inner sep=3pt}]
				\node (W) at (-2,0) {};
				\node (N) at (0,1) {};
				\node (E) at (2,0) {};
				\node (S) at (0,-2) {};
			\end{scope}
			
			
			\draw [rotate around={45:(-1,-1)},blue] (-1,-1) ellipse (1 and 2.5);
			\node [blue] at (-1.2,-0.8) {$X$};
			
			\draw [red] [rotate around={-45:(1,-1)}] (1,-1) ellipse (1 and 2.5);
			\node [red] at (1.2,-0.8) {$Y$};
			
			\draw (W) -- node[above left] {$1$} (E);
			\draw (N) -- node[below right] {$1$} (S);
			
		\end{tikzpicture}
		\caption{The case of crossing cuts $X$ and $Y$}
		\label {Fig_cuts_crossing}
	\end{figure}
	
	Both cuts $X$ and $Y$ have a value of $2$ equal to the total sum of all edges in the graph. Any other cut skips at least one non-zero edge, and its value is less than $2$. Therefore, the cones $K^{+}_{\max} (X)$ and $K^{+}_{\max} (Y)$ are adjacent.
	
	It remains to consider the last configuration when the cuts $X$ and $Y$ do not cross and contain only one element in the symmetric difference. Without loss of generality, we assume $X \subset Y$ and $|X \ominus Y| = 1$.
	We consider the following vector $\mathbf{c}$ of edge weights (Fig.~\ref{Fig_cuts_single_difference}):
	\begin{itemize}
		\item each edge between $X$ and $\bar{Y}$ has a positive weight and their total sum is equal to $1$;
		\item the weights of all other edges are equal to $0$.
	\end{itemize}
	
	\begin{figure}[t]
		\centering
		\begin{tikzpicture}[scale=0.7]

			\begin{scope}[every node/.style={circle,thick,draw,inner sep=3pt}]
				\node [blue,minimum size=10mm] (W) at (-2,0) {$X$};
				\node [red,dashed,minimum size=10mm] (E) at (2,0) {$\bar{Y}$};
				\node (S) at (0,-2) {};
			\end{scope}
			
			\draw [rotate around={45:(-1,-1)},red] (-1,-1) ellipse (1.25 and 3);
			\node [red] at (-1.2,-1.2) {$Y$};
			
			\draw [blue,dashed] [rotate around={-45:(1,-1)}] (1,-1) ellipse (1.25 and 3);
			\node [blue] at (1.2,-1.2) {$\bar{X}$};
			
			\draw (W) -- node[above] {$1$} (E);
			
		\end{tikzpicture}
		\caption{Case of cuts $X$ and $Y$ with one element in the symmetric difference}
		\label {Fig_cuts_single_difference}
	\end{figure}
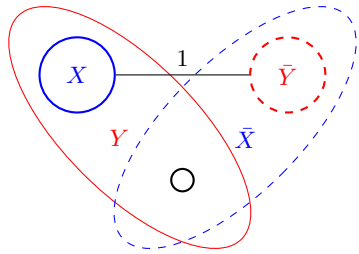
	
	Again, the total sum of the weights of all edges in the graph and the values of the cuts $X$ and $Y$ are equal to $1$. Any other cut skips at least one non-zero edge, and its value is less than $1$. 
	Thus, the cones $K^{+}_{\max} (X)$ and $K^{+}_{\max} (Y)$ are adjacent.	
\end{proof}

Note that if $X,Y \subset V$ are two cuts, then verifying whether the sets are crossing and the corresponding cones are adjacent can be done in linear time $O(V)$.

\section{Graph diameter}

In this section, we present new results on the diameter of the graphs of cone partitions for the min-cut and max-cut problems with non-negative edges.

\begin{theorem}
	The diameter $d(K^{+}_{\min})$ of the graph of cone partition for the min-cut problem with non-negative edges is equal to $2$ for all $|V| \geq 4$.
\end{theorem}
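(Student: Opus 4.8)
The plan is to combine the adjacency criterion of Theorem~\ref{Theorem_mincut_adjacency}---two cones $K^{+}_{\min}(X)$ and $K^{+}_{\min}(Y)$ are adjacent if and only if the cuts $X$ and $Y$ are not crossing---with the observation that a single vertex serves as a universal ``hub'' in the graph. I would prove the lower and upper bounds on the diameter separately and then conclude equality.

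For the lower bound I would exhibit one pair of crossing cuts, which by the criterion are non-adjacent, forcing the diameter to be at least $2$. For $|V| \geq 4$, take $X = \{1,2\}$ and $Y = \{2,3\}$: then $X \cap Y = \{2\}$, $X \backslash Y = \{1\}$, $Y \backslash X = \{3\}$, and $V \backslash (X \cup Y) \supseteq \{4\}$ are all non-empty, so $X$ and $Y$ are crossing. This is exactly where the hypothesis $|V| \geq 4$ enters, since a crossing pair partitions part of $V$ into four non-empty regions. I would also record that the crossing relation is invariant under replacing a cut by its complement (replacing $X$ by $\bar{X}$ permutes the four regions), so it is well defined on the complement-identified cuts $\{X,\bar{X}\}$ and the criterion is meaningful.

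For the upper bound the key step is the remark that a singleton cut $\{v\}$ is \emph{never} crossing with any set $A$: crossing would require both $\{v\} \cap A \neq \emptyset$ and $\{v\} \backslash A \neq \emptyset$, i.e.\ $v \in A$ and $v \notin A$ at once. Hence by Theorem~\ref{Theorem_mincut_adjacency} every singleton cone $K^{+}_{\min}(\{v\})$ is adjacent to every other cone. Given any two non-adjacent cones $K^{+}_{\min}(X)$ and $K^{+}_{\min}(Y)$, the cuts $X$ and $Y$ are crossing, and the four non-empty regions force $|X|, |\bar{X}|, |Y|, |\bar{Y}| \geq 2$; in particular none of these four sets is a singleton, so for any vertex $v$ the cut $Z = \{v\}$ is distinct from $X$ and $Y$ as identified cuts. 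Then $K^{+}_{\min}(X) - K^{+}_{\min}(Z) - K^{+}_{\min}(Y)$ is a genuine path of length $2$, showing that the distance between any two cones is at most $2$.

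Combining the two bounds yields diameter exactly $2$ for all $|V| \geq 4$. I do not expect a genuine obstacle here; the only delicate points are bookkeeping rather than difficulty: checking that the crossing relation descends correctly to complement-identified cuts, and verifying that the hub singleton $Z$ is truly distinct from $X$ and $Y$ so that the length-$2$ path is non-degenerate. Both follow cleanly from the size bounds $|X|,|\bar X|,|Y|,|\bar Y| \geq 2$ that the crossing condition imposes.
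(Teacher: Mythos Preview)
Your proposal is correct and takes essentially the same approach as the paper: both arguments use a singleton cut $\{v\}$ as a universal hub, observing that a one-element set can never be crossing with any other cut, so its cone is adjacent to every other cone. Your write-up is in fact slightly more careful than the paper's, since you explicitly exhibit a crossing pair to establish the lower bound $d \geq 2$ and verify that the hub $Z=\{v\}$ is distinct from $X$ and $Y$; the paper relegates the lower bound to a remark after the proof.
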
	

\begin{proof}
	Cases $|V| \leq 3$ are trivial: in a graph on two vertices there is only one non-empty cut, and in a graph on three vertices, the cones of all three non-empty cuts are pairwise adjacent.
	
	Recall that \textit{the eccentricity} $\epsilon(\mathbf{v})$ of a graph vertex $\mathbf{v}$ is the greatest distance between $\mathbf{v}$ and any other vertex of a graph. Let us show that the graph of the cone partition $K^{+}_{\min}$ contains vertices with eccentricity 1, i.e. vertices that are adjacent to all others.
	We choose a cut $X$ separating exactly one element ($|X| = 1$ or $|\bar{X}| = |V|-1$). Consider an arbitrary cut $Y$ different from $X$:
	\begin{itemize}
		\item if $X \subset Y$, then $X \backslash Y = \emptyset$, the cuts are not crossing, and the corresponding cones $K^{+}_{\min}(X)$ and $K^{+ }_{\min}(Y)$ are adjacent;
		\item if $X \not\subset Y$, then $X \cap Y = \emptyset$, the cuts are not crossing, and the cones $K^{+}_{\min}(X)$ and $K^{+}_{\min}(Y)$ are also adjacent.
	\end{itemize}
	Thus, $\epsilon(K^{+}_{\min}(X)) = 1$, hence the graph diameter equals 2.
\end{proof}
	
Note that for other cuts $Y$ such that $2 \leq |Y| \leq |V|-2$, we have $\epsilon(K^{+}_{\min}(Y)) = 2$. Indeed, for each $Y$ there exists a crossing cut $Z$ such that the cones $K^{+}_{\min}(Y)$ and $K^{+}_{\min}(Z)$ are not adjacent, but there is a path between them in the graph of cone partition through $K^{+}_{\min}(X)$, where $|X| = 1$.

\begin{theorem}
	The diameter $d(K^{+}_{\max})$ of the graph of cone partition for the max-cut problem with non-negative edges is equal to 2 for all $|V| \geq 4$.
\end{theorem}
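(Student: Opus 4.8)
The plan is to prove the two inequalities $d(K^{+}_{\max}) \le 2$ and $d(K^{+}_{\max}) \ge 2$ separately, using the adjacency criterion of Theorem~\ref{Theorem_maxcut_adjacency} throughout. The lower bound is the easy half. For $|V| \ge 4$ I would take two distinct singleton cuts, say $X = \{1\}$ and $Y = \{2\}$. They are disjoint, hence not crossing, and $|X \ominus Y| = 2 \notin \{1, |V|-1\}$, so none of the four symmetric-difference conditions of Theorem~\ref{Theorem_maxcut_adjacency} can hold; the cones $K^{+}_{\max}(X)$ and $K^{+}_{\max}(Y)$ are therefore not adjacent, which already rules out diameter $1$.

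For the upper bound I first simplify what ``non-adjacent'' means. Writing $s = |X \ominus Y|$, the four quantities in Theorem~\ref{Theorem_maxcut_adjacency} take only the values $s, |V|-s, |V|-s, s$, because the set identities $\bar X \ominus Y = X \ominus \bar Y = \overline{X \ominus Y}$ and $\bar X \ominus \bar Y = X \ominus Y$ hold. Hence two cones are non-adjacent exactly when the cuts are not crossing and $2 \le s \le |V|-2$. Since a pair of non-crossing cuts always has one part of one cut contained in a part of the other, the identification of each cut with its complement lets me assume without loss of generality that $X \subsetneq Y$. I then partition $V = A \sqcup B \sqcup C$ with $A = X$, $B = Y \setminus X$, $C = \bar Y$, where $|A| \ge 1$, $|C| \ge 1$, and $|B| = s \ge 2$.

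The heart of the argument is to exhibit a single cut $Z$ whose cone is adjacent to both $K^{+}_{\max}(X)$ and $K^{+}_{\max}(Y)$. My candidate is $Z = \{a,c\}$ for arbitrary $a \in A$ and $c \in C$, both of which exist since $|A|,|C| \ge 1$. I would verify adjacency to $X = A$ by inspecting the four quadrants: three of them, namely $Z \cap A = \{a\}$, $Z \setminus A = \{c\}$, and $V \setminus (Z \cup A) \supseteq B$, are always non-empty, while the fourth, $A \setminus Z$, is non-empty precisely when $|A| \ge 2$. Thus when $|A| \ge 2$ the cut $Z$ crosses $X$, and when $|A| = 1$ we have $X = \{a\}$ and $|X \ominus Z| = |\{c\}| = 1$; either way $Z$ is adjacent to $X$. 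The adjacency of $Z$ to $Y$ is the mirror image: $Z$ crosses $Y$ when $|C| \ge 2$, whereas if $|C| = 1$ then $\bar Y = \{c\}$ and $|\bar Y \ominus Z| = |\{a\}| = 1$. Finally $Z$ is a genuinely distinct cut, since $a \in Z$ lies in both $X$ and $Y$ while $c \in Z$ lies in neither, so $Z \neq X, \bar X, Y, \bar Y$.

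The step I expect to be the main obstacle is exactly this adjacency verification, because the crossing clause fails outright whenever $A$ or $C$ is a singleton: one cannot split a one-element part. The resolution, and the reason the single choice $Z = \{a,c\}$ works uniformly across all sizes of $A$ and $C$, is that a singleton part forces the corresponding symmetric difference down to exactly one element, so the second clause of Theorem~\ref{Theorem_maxcut_adjacency} takes over precisely where the crossing clause breaks down. Producing a common neighbour for every non-adjacent pair yields $d(K^{+}_{\max}) \le 2$, and combined with the lower bound this gives $d(K^{+}_{\max}) = 2$.
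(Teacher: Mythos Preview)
Your proof is correct and follows the same overall strategy as the paper: reduce via Theorem~\ref{Theorem_maxcut_adjacency} to the normalized situation $X \subsetneq Y$ with $|Y\setminus X|\ge 2$, then exhibit a common neighbour $Z$. The difference lies in the construction of $Z$ and in the lower bound.

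For the common neighbour, the paper splits into three cases and uses three different formulas: $Z=\bar Y\cup\{x\}$ when $|X|>1$, $Z=X\cup\{y\}$ when $|\bar Y|>1$, and $Z=X\cup\bar Y$ when $|X|=|\bar Y|=1$. Your single choice $Z=\{a,c\}$ with $a\in X$, $c\in\bar Y$ works uniformly, and your observation that the symmetric-difference clause of Theorem~\ref{Theorem_maxcut_adjacency} kicks in exactly when the crossing clause fails is what lets you avoid the case split. (In fact your $Z$ coincides with the paper's choice whenever $|X|=1$ or $|\bar Y|=1$; it is only in the case $|X|,|\bar Y|\ge 2$ that the constructions differ, and there either works.) This is a genuine streamlining.

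For the lower bound, you give one explicit non-adjacent pair (two singletons), which suffices for the diameter statement. The paper instead proves the stronger fact that for $|V|\ge 5$ \emph{every} vertex has eccentricity~$2$, and handles $|V|=4$ separately by noting that the $2$-element cuts have eccentricity~$1$. Your argument is shorter; the paper's yields slightly more information about the graph.
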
	

\begin{proof}
	Again, cases $|V| \leq 3$ are trivial. Besides, if $|V| = 4$, then for any cut $X$, where $|X|=2$, the cone $K^{+}_{\max}(X)$ has eccentricity 1. Indeed, any other cut on two elements is crossing with $X$, and for any cut on 1 or 3 elements, the symmetric difference with $X$ or $V \backslash X$ contains exactly 1 element.
	
	Let us show that if $|V| \geq 5$, then the eccentricity of each vertex in the graph of a cone partition equals 2.
	We consider two arbitrary cuts $X$ and $Y$ whose cones are not adjacent and construct a cut $Z$ such that the cone $K^{+}_{\max}(Z)$ is adjacent to both $K^{+}_{\max}(X)$ and $K^{+}_{\max}(Y)$.
	
	By Theorem~\ref{Theorem_maxcut_adjacency}, the cones $K^{+}_{\max}(X)$ and $K^{+}_{\max}(Y)$ are not adjacent if and only if the cuts $X$ and $Y$ are not crossing, and the symmetric difference between cuts contains more than one element. In this case, at least one of the cuts or its complement is a subset of another cut or its complement. Without loss of generality, we assume that $X \subset Y$ and $|X \ominus Y| \geq 2$. Let us construct a cut $Z$ according to the following rules.
	
	\begin{enumerate}
		\item If $|X| > 1$, then $Z = \bar{Y} \cup \{x\}$, where $x$ is one of the elements of $X$ (Fig.~\ref{Fig_Z_barY_x}). The cuts $X$ and $Z$ are crossing, hence the cones $K^{+}_{\max}(X)$ and $K^{+}_{\max}(Z)$ are adjacent. The cut $\bar{Y}$ is a subset of $Z$ and the symmetric difference is exactly one element $x$. Therefore, the cones $K^{+}_{\max}(Y)$ and $K^{+}_{\max}(Z)$ are also adjacent. 
		
		\item If $|\bar{Y}| > 1$, then $Z = X \cup \{y\}$, where $y$ is one of the elements of $\bar{Y}$ (Fig.~\ref{Fig_Z_X_y}). Similarly to the previous case, the cuts $\bar{Y}$ and $Z$ are crossing, and $X$ is a subset of $Z$ with exactly one element in the symmetric difference. Therefore, the cones $K^{+}_{\max}(X)$ and $K^{+}_{\max}(Y)$ are adjacent to the cone $K^{+}_{\max}(Z)$.
		
		\item If $|X| = |\bar{Y}| = 1$, then $Z = X \cup \bar{Y}$ (Fig.~\ref{Fig_Z_X_barY}). The cuts $X$ and $\bar{Y}$ are subsets of $Z$ and differ from $Z$ by exactly one element. Therefore, the cones $K^{+}_{\max}(X)$ and $K^{+}_{\max}(Y)$ are adjacent to the cone $K^{+}_{\max}(Z)$.
	\end{enumerate}

	\begin{figure}[p]
		\centering
		\begin{tikzpicture}[scale=0.55]
			\begin{scope}[every node/.style={circle,thick,draw,inner sep=5pt}]
				\node at (0:3) {};
				\node at (60:3) {};
				\node at (120:3) {};
				\node at (180:3) {};
				\node at (240:3) {};
				\node at (300:3) {};
			\end{scope}

			\draw [blue] [rotate around={60:(150:2.5)}] (150:2.5) ellipse (2.5 and 1.25);
			
			\draw [red,dashed] (-90:2.5) ellipse (2.5 and 1.25);
			
			\node [blue] at (150:2.5) {$X$};
			\node [red] at (-90:2.5) {$\bar{Y}$};
			
			\draw [purple] plot [smooth cycle] coordinates {(165:4.25) (240:4.5) (325:4.75)};
			
			\node [purple] at (210:2) {$Z$};
			
		\end{tikzpicture}
		\caption{Case $|X| > 1$ and $Z = \bar{Y} \cup \{x\}$, where $x$ is one of the elements of $X$}
		\label{Fig_Z_barY_x}
	\end{figure}
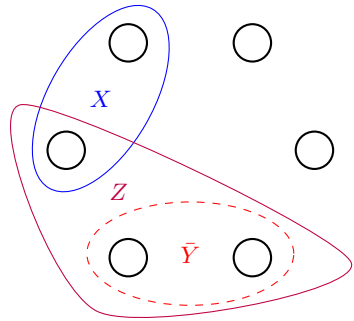

	\begin{figure}[p]
		\centering
		\begin{tikzpicture}[scale=0.55]
			\begin{scope}[every node/.style={circle,thick,draw,inner sep=5pt}]
				\node at (0:3) {};
				\node at (60:3) {};
				\node at (120:3) {};
				\node at (180:3) {};
				\node at (240:3) {};
				\node at (300:3) {};
			\end{scope}

			\draw [blue] [rotate around={60:(150:2.5)}] (150:2.5) ellipse (2.5 and 1.25);
			\draw [red,dashed] (-90:2.5) ellipse (2.5 and 1.25);
			
			\node [blue] at (150:2.5) {$X$};
			\node [red] at (-90:2.5) {$\bar{Y}$};
			
			\draw [purple] plot [smooth cycle] coordinates {(95:4.25) (150:4.25) (180:4.5) (255:4.25)};
			
			\node [purple] at (210:2) {$Z$};
			
		\end{tikzpicture}
		\caption{Case $|\bar{Y}| > 1$, then $Z = X\cup \{y\}$, where $y$ is one of the elements of $\bar{Y}$}
		\label{Fig_Z_X_y}
	\end{figure}
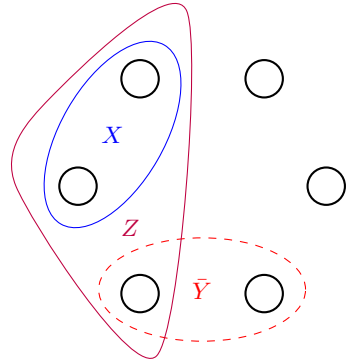

	\begin{figure}[p]
		\centering
		\begin{tikzpicture}[scale=0.55 ]
			\begin{scope}[every node/.style={circle,thick,draw,inner sep=5pt}]
				\node at (0:3) {};
				\node at (60:3) {};
				\node at (120:3) {};
				\node at (180:3) {};
				\node at (240:3) {};
				\node at (300:3) {};
			\end{scope}

			\draw [blue] (60:3) circle (1.15);
			\draw [red,dashed] (240:3) circle (1.15);
			
			\node [blue] at (60:2.2) {$X$};
			\node [red] at (240:2.2) {$\bar{Y}$};
			
			\draw [purple] [rotate around={60:(0,0)}] (0,0) ellipse (4.5 and 1.85);
			
			
			\node [purple] at (0,0) {$Z$};
			
		\end{tikzpicture}
		\caption{Case $|X| = |\bar{Y}| = 1$ and $Z = X \cup \bar{Y}$}
		\label{Fig_Z_X_barY}
	\end{figure}
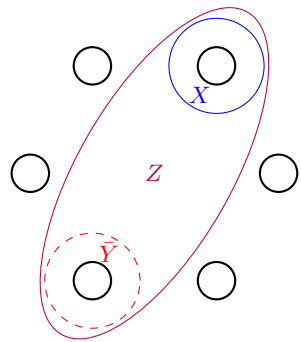
	
	Therefore, for any cuts $X$ and $Y$ there exists a cut $Z$ whose cone $K^{+}_{\max}(Z)$ is adjacent both to $K^{+}_{\max} (X)$ and $K^{+}_{\max}(Y)$. 
	On the other hand, if $|V| \geq 5$, then for any cut $X$ there is a cut $Y$ obtained from $X$ by adding or removing 2 elements, such that the cones $K^{+}_{\max} (X)$ and $ K^{+}_{\max}(Y)$ are not adjacent. Thus, the eccentricity of each vertex of the graph of a cone partition equals 2, whence the diameter of the graph is also 2.
\end{proof}

\section{Vertex degrees}

Now we study the degrees of vertices in the graphs of cone partitions for the min-cut and max-cut problems with non-negative edges.
They are of interest if the adjacency criteria are applied as neighborhood structures for simplex-like algorithms.
Let's call the \textit{cardinality of the cut} $S$ the minimum of the cardinalities $|S|$ and $|V \backslash S|$.

\begin{theorem}\label{Theorem_mincut_vertex_degree}
	Let $|V| = n$ and the cardinality of the cut $X \subset V$ equals $k$, then the degree of the vertex $K^{+}_{\min}(X)$ in the graph of the cone partition for the min-cut problem with non-negative edges is equal to
	\[2^{n-k} + 2^{k} - 4.\]
\end{theorem}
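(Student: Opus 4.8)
The plan is to convert the degree computation into a counting problem by means of Theorem~\ref{Theorem_mincut_adjacency}: the degree of $K^{+}_{\min}(X)$ is exactly the number of cuts $Y \neq X$ that are \emph{not} crossing with $X$, so the whole argument reduces to counting non-crossing cuts. First I would rewrite the non-crossing condition in a countable form. Partitioning $V$ into the four regions $X \cap Y$, $X \setminus Y$, $Y \setminus X$, $V \setminus (X \cup Y)$, the pair is non-crossing precisely when one of these is empty, i.e. when one of $Y \subseteq X$, $X \subseteq Y$, $Y \subseteq \bar{X}$, $\bar{X} \subseteq Y$ holds. Since a cut is the unordered pair $\{Y,\bar{Y}\}$, and replacing $Y$ by $\bar{Y}$ turns $Y \subseteq X$ into $\bar{X} \subseteq Y$ and $Y \subseteq \bar{X}$ into $X \subseteq Y$, these four conditions collapse at the level of cuts into two: $\{Y,\bar{Y}\}$ is non-crossing with $X$ if and only if one of its two representatives is contained in $X$, or one is contained in $\bar{X}$.

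Next I would count these two families. Fix $|X| = k$; since the claimed formula is symmetric under $k \leftrightarrow n-k$ it is immaterial whether $k = |X|$ or $k = |\bar{X}|$. The cuts having a representative contained in $X$ are the images of the nonempty subsets $S \subseteq X$ under $S \mapsto \{S,\bar{S}\}$. This map is injective, because a representative inside $X$ is unique: $\bar{S} \subseteq X$ would force $\bar{X} \subseteq S \subseteq X$, which is impossible for a proper nonempty cut $X$. Hence there are $2^{k}-1$ such cuts, and symmetrically $2^{n-k}-1$ cuts with a representative contained in $\bar{X}$.

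Finally I would correct for the overlap. A cut lying in both families has a representative in $X$ and a representative in $\bar{X}$; examining the two ways in which such representatives can define the same cut shows that the only such cut is $\{X,\bar{X}\}$ itself. Inclusion–exclusion then gives $(2^{k}-1)+(2^{n-k}-1)-1 = 2^{k}+2^{n-k}-3$ non-crossing cuts including $X$, and removing $X$ itself yields the degree $2^{n-k}+2^{k}-4$.

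The region/containment manipulations are routine; the step that needs genuine care — and where I expect the only real pitfall — is the passage from subsets to cuts under the complement identification. One must verify that each representative-in-$X$ family is counted without duplication (the injectivity argument, which uses $X \neq \emptyset, V$) and that the two families overlap in exactly the single cut $X$, so that no degenerate set ($\emptyset$ or $V$) and no double count of $X$ slips in. A quick sanity check confirms the arithmetic: for $n=4$ a singleton cut ($k=1$) has degree $8+2-4=6=2^{3}-1$, i.e. it is adjacent to every other cut, while a balanced cut ($k=2$) has degree $4$.
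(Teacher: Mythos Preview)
Your proof is correct. Both you and the paper reduce the question to a pure counting problem via Theorem~\ref{Theorem_mincut_adjacency}, but you count from opposite sides: the paper counts the \emph{crossing} cuts directly by the product formula $\tfrac{(2^{k}-2)(2^{n-k}-2)}{2}$ and subtracts from the total $2^{n-1}-2$, whereas you count the \emph{non-crossing} cuts directly by inclusion--exclusion on the two families ``has a representative inside $X$'' and ``has a representative inside $\bar{X}$''. Your route has the mild advantage of handling all $k$ uniformly (the paper treats $k=1$ as a separate case, although its general argument in fact covers it as well); the paper's route is marginally shorter because the product count avoids the injectivity and overlap checks you need for inclusion--exclusion. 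Your care with the complement identification---verifying that $S \mapsto \{S,\bar S\}$ is injective on nonempty subsets of $X$ and that the two families meet only in $\{X,\bar X\}$---is exactly the point where a sloppy version of this argument would go wrong, and you handle it cleanly.
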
	

\begin{proof}
	We separately consider two cases by the cardinality of the cut. If $k = 1$, then, by Theorem~\ref{Theorem_mincut_adjacency}, the corresponding cone is adjacent to the cones of all other cuts in the graph. Therefore, the degree of a vertex in the graph of cone partition is equal to $2^{n-1} - 2$.
	
	Now we examining the case $k > 1$. Consider some cut $X$ and its complement $\bar{X} = V \backslash X$ (see Fig.~\ref{Fig_X_cross_Y}). By definition, a cut $Y$ crossing with $X$ must contain some elements from $X$ and some elements from $\bar{X}$. The number of subsets of a finite set of $k$ elements, excluding the empty set and the entire set, is $2^{k}-2$. Consider all combinations of admissible subsets in $X$ and $\bar{X}$, divided by 2 to exclude complements, and we get that the total number of cuts that cross $X$ is
	\[ \frac{(2^k-2)\cdot(2^{n-k}-2)}{2} = 2^{n-1} - 2^k - 2^{n-k} + 2. \] 
	
	The degree of the vertex in the graph of the cone partition corresponding to the cut $X$ can be obtained by subtracting the number of crossing cuts from the total number of cuts, different from $X$:
	\begin{align*}
		\operatorname{deg} (K^{+}_{\min}(X)) &= 2^{n-1} - 2 - \left( 2^{n-1} - 2^k - 2^{n-k} + 2 \right) \\
		&= 2^{n-k} + 2^{k} - 4. 
	\end{align*}

	\begin{figure}[t]
		\centering
		\begin{tikzpicture}[scale=0.85]
			\begin{scope}[every node/.style={circle,thick,draw,inner sep=5pt}]
				\node at (0,0) {};
				\node at (1,0) {};
				\node at (2,0) {};
				\node at (3,0) {};
				\node at (4,0) {};
				\node at (5,0) {};
				\node at (6,0) {};
				\node at (7,0) {};
				\node at (8,0) {};
			\end{scope}

			\draw [blue] (1.5,0) ellipse (2 and 1);
			\draw [blue,dashed] (6,0) ellipse (2.5 and 1);

			\draw [red] (4,0) ellipse (2.5 and 1.25);
			
			\node [blue] at (1,-0.5) {$X$};
			\node [blue] at (7,-0.5) {$\bar{X}$};
			\node [red] at (3.5,-0.75) {$Y$};
			
			\draw [
			thick,
			decoration={
				brace,
				mirror,
				raise=0.5cm
			},
			decorate
			] (-0.25,-1) -- (3.25,-1) 
			node [pos=0.5,anchor=north,yshift=-0.55cm] {$k$}; 
			
			\draw [
			thick,
			decoration={
				brace,
				mirror,
				raise=0.5cm
			},
			decorate
			] (3.75,-1) -- (8.25,-1) 
			node [pos=0.5,anchor=north,yshift=-0.55cm] {$n-k$}; 
			
		\end{tikzpicture}
		\caption{The cut $X$, its complement $\bar{X}$, and the crossing cut $Y$}
		\label{Fig_X_cross_Y}
	\end{figure}
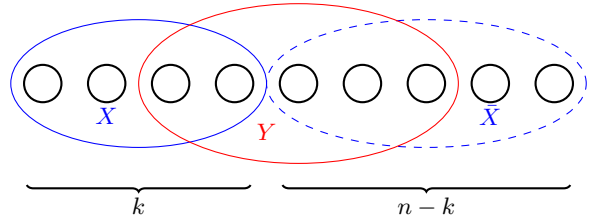
\end{proof}

\begin{corollary}
	The degree of a vertex in the graph of the cone partition for the min-cut problem with non-negative edges
	is bounded above and below by
	\[
	2^{\lceil \frac{n}{2} \rceil} + 2^{\lfloor \frac{n}{2} \rfloor} - 4 \leq \operatorname{deg} (K^{+}_{\min}(X)) \leq 2^{n-1} - 2.
	\]	
\end{corollary}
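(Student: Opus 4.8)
The plan is to reduce the corollary to an elementary extremal analysis of the degree formula furnished by Theorem~\ref{Theorem_mincut_vertex_degree}. Since the cardinality of a cut $X$ is defined as $\min(|X|, |V \backslash X|)$, it takes integer values in the range $1 \leq k \leq \lfloor n/2 \rfloor$. I would therefore regard the degree as a function $f(k) = 2^{n-k} + 2^{k} - 4$ of a single integer variable $k$ over this range, and locate its maximum and minimum; the two bounds will then fall out as the values at the endpoints.

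First I would establish that $f$ is strictly decreasing on $1 \leq k \leq \lfloor n/2 \rfloor$. This follows by computing the forward difference $f(k+1) - f(k) = 2^{k} - 2^{n-k-1}$, which is negative precisely when $k < (n-1)/2$. I would then verify that the largest step, from $k = \lfloor n/2 \rfloor - 1$, still satisfies this inequality for both parities of $n$ (indeed $\lfloor n/2 \rfloor - 1 < (n-1)/2$ whether $n$ is even or odd), so that $f$ decreases across the entire admissible range.

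Given monotonicity, the extrema sit at the endpoints. At $k = 1$ we obtain $f(1) = 2^{n-1} + 2 - 4 = 2^{n-1} - 2$, which is the claimed upper bound. At $k = \lfloor n/2 \rfloor$ we obtain $f(\lfloor n/2 \rfloor) = 2^{n - \lfloor n/2 \rfloor} + 2^{\lfloor n/2 \rfloor} - 4$, and the identity $n - \lfloor n/2 \rfloor = \lceil n/2 \rceil$ rewrites this as the claimed lower bound $2^{\lceil n/2 \rceil} + 2^{\lfloor n/2 \rfloor} - 4$.

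There is no genuine obstacle once Theorem~\ref{Theorem_mincut_vertex_degree} is available; the statement is routine. The only point demanding care is pinning down the exact admissible range of $k$ and the direction of monotonicity. The underlying real function $2^{n-k} + 2^{k}$ is convex and symmetric about $n/2$, so one must confirm that the integer argument never crosses the symmetry point --- this is exactly why the minimum is attained at $\lfloor n/2 \rfloor$ rather than at the (possibly non-integer) point $n/2$, and why the maximum occurs at the smallest admissible cardinality $k = 1$.
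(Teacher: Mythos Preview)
Your argument is correct and is precisely the intended one: the paper states the corollary without proof because it follows immediately from the degree formula of Theorem~\ref{Theorem_mincut_vertex_degree} by optimizing over the admissible cardinalities $1 \le k \le \lfloor n/2 \rfloor$. Your monotonicity computation via the forward difference, together with the observation that $n - \lfloor n/2 \rfloor = \lceil n/2 \rceil$, supplies exactly the missing details.
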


We now turn to the max-cut problem.

\begin{theorem} \label{Theorem_maxcut_vertex_degree}
	Let $|V| = n$ and the cardinality of the cut $X \subset V$ equals $k$, then the degree of the vertex $K^{+}_{\max}(X)$ in the graph of the cone partition for the max-cut problem with non-negative edges is equal to
	\[
	\begin{cases}
		n-1, &\text{if } $k = 1$,\\
		2^{n-1} - 2^{k} - 2^{n-k} + 2 + n, &\text{otherwise}.
	\end{cases}
	\]
\end{theorem}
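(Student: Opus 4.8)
The plan is to count directly the number of cuts $Y \neq X$ whose cone $K^{+}_{\max}(Y)$ is adjacent to $K^{+}_{\max}(X)$, reading off the two families of neighbors from the adjacency criterion of Theorem~\ref{Theorem_maxcut_adjacency}: the cuts crossing with $X$, and the cuts whose symmetric difference with $X$ or $\bar{X}$ is a single element. The first thing I would establish is that these two families are disjoint, so that the degree is simply the sum of their sizes. Disjointness is immediate: if $|X \ominus Y| = 1$ then $Y = X \cup \{v\}$ or $Y = X \setminus \{v\}$, so one of $X, Y$ contains the other and hence $X \setminus Y = \emptyset$ or $Y \setminus X = \emptyset$, which already violates the definition of crossing sets.

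For the crossing family I would reuse the enumeration already carried out in the proof of Theorem~\ref{Theorem_mincut_vertex_degree}: the number of cuts crossing a cut of cardinality $k$ in $K_n$ is $2^{n-1} - 2^{k} - 2^{n-k} + 2$. (Note that this expression already evaluates to $0$ when $k = 1$, which will be consistent with the boundary analysis below.)

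For the symmetric-difference family I would exploit the identification $Y \equiv \bar{Y}$. The four conditions of Theorem~\ref{Theorem_maxcut_adjacency} collapse to two, since one checks the set identities $X \ominus \bar{Y} = \bar{X} \ominus Y$ and $\bar{X} \ominus \bar{Y} = X \ominus Y$. Moreover, viewed as unordered cuts, the conditions $|X \ominus Y| = 1$ and $|\bar{X} \ominus Y| = 1$ describe exactly the same family, namely the cuts $X \ominus \{v\}$ obtained from $X$ by moving a single vertex $v$ to the opposite side of the partition. There are $n$ such cuts, one per vertex, and for $n \geq 4$ they are pairwise distinct (flipping different vertices yields cuts differing from $X$ in different coordinates, and the two unordered representatives coincide only when $n = 2$). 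When the cardinality $k \geq 2$, both sides of the partition have at least two vertices, so every move leaves both sides non-empty and all $n$ cuts are admissible; adding this to the crossing count yields $2^{n-1} - 2^{k} - 2^{n-k} + 2 + n$.

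The one place that requires separate care --- and the main obstacle --- is the boundary case $k = 1$. Here there are no crossing cuts at all, since a singleton side $X = \{x\}$ meeting $Y$ forces $X \subseteq Y$ and hence $X \setminus Y = \emptyset$, so the entire degree comes from the move family. But now exactly one of the $n$ moves, namely pulling the unique vertex out of the singleton side, empties that side and produces the forbidden empty cut; discarding it leaves $n - 1$ admissible neighbors, matching the stated value and accounting precisely for the off-by-one relative to the generic formula. I would emphasize throughout that the complement identification must be tracked carefully, since it is what merges the two symmetric-difference conditions into a single family of size $n$ rather than $2n$, and it is the source of both the main count and the single excluded move in the $k = 1$ case.
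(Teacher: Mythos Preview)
Your proposal is correct and follows essentially the same approach as the paper: both split the count into crossing cuts (reusing the formula from Theorem~\ref{Theorem_mincut_vertex_degree}) and single-vertex-move cuts, then handle $k=1$ separately. You are somewhat more explicit than the paper in verifying that the two families are disjoint and that the four symmetric-difference conditions collapse to a single $n$-element family under the identification $Y \equiv \bar Y$, but the underlying argument is the same.
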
	

\begin{proof}
	Similarly, we consider separately the cases $k = 1$ and $k > 1$.
	Let $k = 1$. A unit cut $X$ cannot cross with any other cut.
	Therefore, the cone $K^{+}_{\max}(X)$ is adjacent only to the cones $K^{+}_{\max}(Y)$ for which $|X \ominus Y| = $1, i.e. $X \subset Y$ and $|Y \backslash X| = $1. There are exactly $n-1$ such cuts in total.
	
		\begin{figure}[t]
		\centering
		\begin{tikzpicture}[scale=0.9]
			\begin{scope}[every node/.style={circle,thick,draw,inner sep=5pt}]
				\node at (0,0) {};
				\node at (1,0) {};
				\node at (2,0) {};
				\node at (3,0) {};
				\node at (4,0) {};
				\node at (5,0) {};
				\node at (6,0) {};
				\node at (7,0) {};
				\node at (8,0) {};
			\end{scope}

			\draw [blue] (1.5,0) ellipse (2 and 1);
			\draw [blue,dashed] (6,0) ellipse (2.5 and 1);

			\draw [red] (2,0) ellipse (1.4 and 0.75);
			\node [red] at (2,-0.5) {$Y_1$};
			
			\draw [red] (2,0) ellipse (2.6 and 1.25);
			
			\node [blue] at (0.5,-0.5) {$X$};
			\node [blue] at (6,-0.5) {$\bar{X}$};
			\node [red] at (3.5,-0.75) {$Y_2$};
			
			\draw [
			thick,
			decoration={
				brace,
				mirror,
				raise=0.5cm
			},
			decorate
			] (-0.25,-1) -- (3.25,-1) 
			node [pos=0.5,anchor=north,yshift=-0.55cm] {$k$}; 
			
			\draw [
			thick,
			decoration={
				brace,
				mirror,
				raise=0.5cm
			},
			decorate
			] (3.75,-1) -- (8.25,-1) 
			node [pos=0.5,anchor=north,yshift=-0.55cm] {$n-k$}; 
			
		\end{tikzpicture}
		\caption{An example of cuts $X,Y_1,Y_2$, such that $|X \ominus Y_1| = |X \ominus Y_2| = 1$}
		\label{Fig_X_Y_1_element}
	\end{figure}
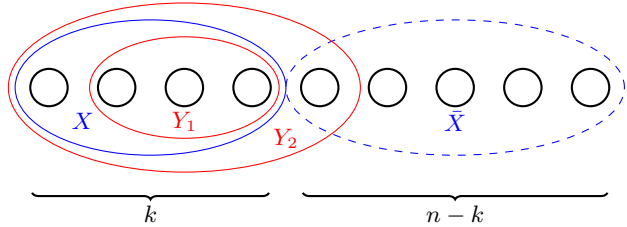
	
	Now consider some cut $X$ of cardinality $k > 1$.
	As previously stated, the number of cuts that are crossing with $X$ is equal to 
	\[2^{n-1} - 2^{k} - 2^{n-k} + 2.\]
	The number of cuts $Y$ for which $|X \ominus Y| = 1$ is equal to $n$ since there are $k$ ways to subtract one element from the cut $X$ and $n-k$ ways to add one element to $X$ (see Fig.~\ref{Fig_X_Y_1_element}). Therefore, the degree of the vertex corresponding to the cut $X$ is equal to
	\[\operatorname{deg}(K^{+}_{\max}(X)) = 2^{n-1} - 2^{k} - 2^{n-k} + 2 + n.\]
\end{proof}

\begin{corollary}
	The degree of a vertex in the graph of the cone partition for the max-cut problem with non-negative edges is bounded above and below by
	\[n-1 \leq \operatorname{deg} (K^{+}_{\max}(X)) \leq 2^{n-1} - 2^{\lceil \frac{n}{2} \rceil} - 2^{\lfloor \frac{n}{2} \rfloor} + 2 + n.\]
\end{corollary}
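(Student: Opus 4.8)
The plan is to treat the degree formula from Theorem~\ref{Theorem_maxcut_vertex_degree} as a function of the cut cardinality $k$ and locate its extreme values over the admissible range $1 \le k \le \lfloor n/2 \rfloor$ (recall that the cardinality is the smaller of $|X|$ and $|\bar X|$). Writing $h(k) = 2^{k} + 2^{n-k}$, for every cut of cardinality $k \ge 2$ the degree equals $2^{n-1} + n + 2 - h(k)$, so bounding the degree reduces to bounding $h$. The function $h$ is convex and symmetric about $k = n/2$; hence on an integer interval it attains its maximum at the endpoint farthest from $n/2$ and its minimum at the integer closest to $n/2$. The unit-cut value $n-1$ (the $k=1$ branch) must be compared against both bounds separately, since the formula is different there.

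For the upper bound I would maximize the degree, that is, minimize $h(k)$ over $k \in \{2,\dots,\lfloor n/2 \rfloor\}$. By convexity this minimum is reached at $k = \lfloor n/2 \rfloor$, giving $h(\lfloor n/2 \rfloor) = 2^{\lfloor n/2 \rfloor} + 2^{\,n-\lfloor n/2 \rfloor} = 2^{\lfloor n/2 \rfloor} + 2^{\lceil n/2 \rceil}$, where the identity $n - \lfloor n/2 \rfloor = \lceil n/2 \rceil$ handles both parities automatically. Substituting yields exactly $2^{n-1} - 2^{\lceil n/2 \rceil} - 2^{\lfloor n/2 \rfloor} + 2 + n$, the claimed right-hand side, attained by the balanced cuts. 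It then remains only to check that the $k=1$ branch does not exceed it, i.e. $n-1 \le 2^{n-1} - 2^{\lceil n/2 \rceil} - 2^{\lfloor n/2 \rfloor} + 2 + n$, equivalently $2^{\lceil n/2 \rceil} + 2^{\lfloor n/2 \rfloor} \le 2^{n-1} + 3$, which holds for all $n \ge 4$.

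For the lower bound I would argue that the minimum degree is $n-1$, attained at $k=1$, and that no cut with $k \ge 2$ falls below it. The latter amounts to $2^{n-1} + n + 2 - h(k) \ge n-1$, that is $h(k) \le 2^{n-1} + 3$, for all $k \in \{2,\dots,\lfloor n/2 \rfloor\}$. Since $h$ is decreasing on $[2, n/2]$, it is maximized over this range at the endpoint $k=2$, so it suffices to verify $h(2) = 4 + 2^{n-2} \le 2^{n-1} + 3$, which reduces to $2^{n-2} \ge 1$ and holds for all $n \ge 2$. Combining the two parts gives the stated double inequality.

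I expect the computation to be entirely routine; the only points requiring care are the discontinuity of the degree formula at $k=1$, which forces the unit-cut value to be checked against both bounds by hand rather than read off the monotonicity argument, and the floor/ceiling bookkeeping that makes the minimizer $k = \lfloor n/2 \rfloor$ reproduce the symmetric expression $2^{\lceil n/2 \rceil} + 2^{\lfloor n/2 \rfloor}$ uniformly for even and odd $n$.
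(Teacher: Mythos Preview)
Your argument is correct and is precisely the routine optimization the paper has in mind: the corollary is stated in the paper without proof, as an immediate consequence of Theorem~\ref{Theorem_maxcut_vertex_degree}, and your analysis of $h(k)=2^{k}+2^{n-k}$ together with the separate handling of the $k=1$ branch is exactly the intended one-line justification. The only cosmetic remark is that your inequality $2^{\lceil n/2\rceil}+2^{\lfloor n/2\rfloor}\le 2^{n-1}+3$ in fact holds for all $n\ge 2$, not just $n\ge 4$; for $n\le 3$ every cut has cardinality~$1$ anyway, so the upper bound is trivially satisfied there.
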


\section{Conclusion}

The results of the research are summarized in the pivot Table~\ref{Table_pivot_1-skeleton_cut}.
The cut problem with arbitrary edges and the max-cut problem with non-negative edges are NP-hard. On the other hand, the min-cut problem with non-negative edges is polynomially solvable.
Studying the 1-skeleton and the graphs of cone partitions associated with the cut problem, we see that in all cases verifying the adjacency is a simple problem, the diameter of the polyhedral graph does not exceed 2, and the degrees of vertices are exponential. Of all the characteristics of a 1-skeleton, only the clique number correlates with the complexity of the problem: linear for the polynomially solvable min-cut problem with non-negative edges, and superpolynomial for the NP-hard max-cut problem with non-negative edges and cut problems with arbitrary edges. 
Besides, the adjacency criteria and the structure of the graphs of cone partitions can be applied to develop and analyze simplex-like combinatorial algorithms for cut problems with non-negative edges.

\subsubsection*{Acknowledgements.}
We are very grateful to the anonymous reviewers for their comments and suggestions which helped to improve the
presentation of the results in this paper.


%
%
%
 \bibliographystyle{splncs04}
 \bibliography{Nikolaev_Korostil_MOTOR2023}

\end{document}